\newenvironment{proof}{\medbreak\noindent{\it Proof.}\rm}{\hfill$\square$\rm}
 \newcommand{\beq}{\begin{equation}}
\newcommand{\eeq}{\end{equation}}
\newenvironment{proof*}{\vskip 2mm\noindent {}}{\hfill $\Box$ \vskip 2mm}
\newcommand{\Cn}{{\mathbb  C\sp n}}
\newcommand{\N}{{\mathbb  N}}
\newcommand{\I}{{\mathcal I}}
\newcommand{\J}{{\mathcal{J}}}
\newcommand{\CO}{{\mathcal{O}}}
\newcommand{\PSH}{{\operatorname{PSH}}}
\renewcommand{\dim}{{\operatorname{dim}}}
\newtheorem{theorem}{Theorem}[section]
\newtheorem{cor}[theorem]{Corollary}
\newtheorem{prop}[theorem]{Proposition}
\begin{document}

\begin{center}
{\Large\bf A remark on symbolic powers}
\end{center}

\begin{center}
{\large Alexander Rashkovskii}
\end{center}

\vskip1cm

\begin{abstract} A non-local version of the Brian\c{c}on-Skoda theorem is obtained. In particular, the symbolic powers $\I^{(p)}$ of a zero dimensional radical ideal $\I\subset \CO(X)$ generated by holomorphic functions $g_1,\ldots,g_r\in\CO(Y)$  on a Stein manifold $X\Subset Y$ are shown to satisfy $\I^{(p+q)}\subset \I^p$ for $q=\min\{\dim\, X,r-1\}$ and all $p\in\N$, which contributes to the so-called containment problem.

\medskip\noindent
{\sl Mathematic Subject Classification}: 32E25, 13A10
\end{abstract}

\section{Introduction}

Recall that symbolic powers $\I^{(p)}$ of a radical ideal $\I\subset\CO(X)$ on a complex manifold $X$ are composed by the functions vanishing to order at least $p$ on each irreducible component of the variety $|\I|$ of the ideal. Evidently, the ordinary powers $\I^p$ are related to the symbolic ones by $ \I^p \subset \I^{(p)}$ for any $p\in\N$. Concerning relations in the opposite direction (the {\sl containment problem}), a significant activity  was aimed mainly at finding the smallest $k\ge 1$ such that
\beq\label{ELS} \I^{(kp)} \subset \I^{p}\eeq
(see, e.g., \cite{Sw}, \cite{ELS},  \cite{HH}, \cite{CHHT}, \cite{LS}, \cite{BH}; a detailed overview and further references are given in \cite{SS}). In particular, it was proved in \cite{ELS} that one can take $k$ equal to the largest codimension of the components of $|\I|$.

On the other hand, a corresponding question on relating local ideals $\J$ of holomorphic germs to their integral closures $\overline\J$ was resolved (long before the aforementioned results) by the Brian\c{c}on-Skoda theorem (\cite{BS}, see also Thm.~10.4 at Chapter VIII of \cite{Dbook}) in terms of an additive constant. Namely, for an ideal $\J\subset\CO_0(\Cn)$ generated by germs $g_1,\ldots,g_r$, one has
\beq\label{BS}\overline\J^{(p+q)}\subset\J^p,\quad  p= 1,2,\ldots,\eeq
where $q=\min\{n,r-1\}$ and
$\overline\J^{(k)}$ is the ideal of germs $f$ satisfying $|f|\le C\,|g|^k$, and $g=(g_1,\ldots,g_r)$. The main ingredient of an analytic proof of that result is the following Skoda's theorem on ideal generation.

\medskip
{\bf Theorem A} (\cite[Thm. 1]{S}, see also  Thm.~9.10 at Chapter VIII of \cite{Dbook}) {\sl Let $\Omega$ be a pseudoconvex domain in $\Cn$, $\psi\in\PSH(\Omega)$, and let $f, g_1,\ldots,g_r\in\CO(\Omega)$ satisfy
$$ \int_\Omega |f|^2 |g|^{-2(\alpha q+1)}e^{-\psi}\,dV=I<\infty$$
for some $\alpha>1$ and $q=\min\{n,r-1\}$. Then there exists $h_1,\ldots,h_r\in\CO(\Omega)$ such that $f=\sum g_jh_j$ and
$$ \int_\Omega |h|^2 |g|^{-2\alpha q}e^{-\psi}\,dV<\frac{\alpha}{\alpha-1}\,I.$$
}

\medskip

In this short note, we show that (\ref{BS}) readily extends, by the same method, to ideals $\I\subset\CO(\Omega)$ whose generators are  holomorphic in a neighbourhood of $\overline\Omega$, giving thus a non-local version of the Brian\c{c}on-Skoda theorem.
 This observation might be known, however we were unable to find explicit references.
  When the ideal $\I$ is, in addition, radical and zero dimensional, the condition $|f|\le C\,|g|^k$ means precisely that the multiplicity of $f$ at each point of the set $g^{-1}(0)$ is at least $k$, so the ideals $\overline\I^{(k)}$ are just the symbolic powers $\I^{(k)}$. This is not true for radical ideals of positive dimension because in the definition of the symbolic powers no condition is put on $f$ on the singular set of the variety, so, in general, $\overline\I^{(k)}\subset \I^{(k)}$.

The case of zero dimensional varieties $|\I|$ is considered in Section~\ref{sec2}. We were actually primarily interested in this case because of considerations in \cite{RT} where the powers of ideals of holomorphic functions vanishing on finite sets were studied, see a remark after Corollary~\ref{corgap}. Note that for zero dimensional radical ideals, the inclusions $\J^{(p+q)}\subset\J^p$ imply (\ref{ELS})
for any $k\ge 1+\frac{n}{2}$ and $p\in\N$, which improves for this case the aforementioned result from  \cite{ELS}.

The case $\dim\,|\I|>0$ is treated in Section~\ref{sec3}. Note that without assuming the generators being holomorphic in a neighbourhood of $\Omega$, the inclusions $\overline\I^{(p+n)} \subset\I^p$ for any $p\in\N$, in the sense of sheaves, follow directly from the Brian\c{c}on-Skoda theorem.

The author thanks Tomasz Szemberg for attracting our attention to the survey \cite{SS}. The author is grateful to Lawrence Ein, Tai Ha, and Robert Lazarsfeld for pointing out that in a previous version, corollaries of the main result were erroneously extended to radical ideals of positive dimension.

\section{Zero dimensional case}\label{sec2}

In this section, $S$ is a finite subset of a bounded pseudoconvex domain $\Omega\subset\Cn$ and $\I$ is an ideal of functions from $\CO(\Omega)$ whose variety $|\I|$ (common set of zeros) coincides with $S$. By Cartan's theorem, $\I$ has finitely many generators $g_1,\ldots,g_r\in\CO(\overline\Omega)$ and, moreover, $g^{-1}(0)\cap\overline\Omega=S$, where $g=(g_1,\ldots,g_r)$. Let $\overline\I^{(p)}$, $p>0$, be the collections of functions $f\in\CO(\Omega)$ such that $\log|f(z)|\le p\,\log|g(z)| +O(1)$ as $z\to S$.

Denote also
$$ \widehat\I^{(p)}=\{f\in\CO(\Omega):\:  |f||g|^{-p}\in L^2(\Omega')\ \forall
\Omega'\Subset\Omega\}, \quad p>0.
$$
Evidently, $\overline\I^{(p)} \subset \widehat\I^{(p)}$.
If $f\in \widehat\I^{(p)}$, then for each point $a\in S$ we have $|f|\,|g|^{-(p+\epsilon)}\in L^2_{loc}(a)$ with some $\epsilon>0$ (which follows, for example, from Demailly's strong openness conjecture proved in \cite{GZ}), and of course this is true for any $\epsilon>0$ if $a\not\in S$. Then, by the compactness of $S$, we have
\begin{equation}\label{eps} \int_{\Omega'} |f|^2|g|^{-2(p+\epsilon)}\,dV<\infty \end{equation}
for some $\epsilon>0$ and all $\Omega'\Subset\Omega$.

\begin{theorem}\label{prop1} If $|\I|\Subset\Omega$, then $\widehat\I^{(p+1)}=\I\cdot \widehat\I^{(p)}$ for any $p\ge q=\min\{n,r-1\}$ and, by induction, $\overline\I^{(p+q)} \subset \widehat\I^{(p+q)}\subset\I^p$ for any $p\in\N$.
\end{theorem}

\begin{proof} Evidently, $\I\cdot \widehat\I^{(p)}\subset \widehat\I^{(p+1)}$, so it suffices to show the reverse inclusion.
Take any $f\in \widehat\I^{(p+1)}$. Then there exists a non-negative function $\psi\in \PSH(\Omega)$ (for example, $\psi=|f|$) such that
$$ \int_{\Omega} |f|^2|g|^{-2(p+1+\epsilon)}e^{-\psi}\,dV=I<\infty$$
with $\epsilon>0$ as in (\ref{eps}) with $p$ replaced by $p+1$. By Theorem~A, there exist $h_1,\ldots,h_r\in\CO(\Omega)$ such that
$f=\sum_j g_jh_j$ and
$$ \int_{\Omega} |h|^2|g|^{-2(p+\epsilon)}e^{-\psi}\,dV<\frac{\alpha}{\alpha-1}\,I $$
with $\alpha=(p+\epsilon)/q$. Therefore, for any $\Omega'\Subset\Omega$, we have
$$ \int_{\Omega'} |h|^2|g|^{-2(p+\epsilon)}\,dV\le  e^{\sup_{\Omega'}\psi} \int_{\Omega} |h|^2|g|^{-2(p+\epsilon)}e^{-\psi}\,dV< \infty,
$$
 so $h_1,\ldots,h_r\in \widehat\I^{(p)}$ and $f\in \I\cdot \widehat\I^{(p)}$.
\end{proof}

\bigskip
By induction, Theorem~\ref{prop1} implies

\begin{theorem}\label{BSglobal} If $|\I|\Subset\Omega$, then $\overline\I^{(p+q)} \subset \widehat\I^{(p+q)}\subset\I^p$ for any $p\in\N$.
\end{theorem}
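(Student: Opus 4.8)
The plan is to treat the two inclusions in the statement separately. The first one, $\overline\I^{(p+q)}\subset\widehat\I^{(p+q)}$, requires nothing new: it is exactly the pointwise observation recorded just above the statement of Theorem~\ref{prop1}. Indeed, a bound $\log|f|\le (p+q)\log|g|+O(1)$ near $S$ forces $|f|\,|g|^{-(p+q)}$ to be locally bounded near $S$, while away from $S$ the modulus $|g|$ is bounded below on every $\Omega'\Subset\Omega$ because $g^{-1}(0)\cap\overline\Omega=S$; hence $|f|\,|g|^{-(p+q)}\in L^2(\Omega')$ for all such $\Omega'$. So the whole content lies in proving $\widehat\I^{(p+q)}\subset\I^p$ for every $p\in\N$, and this I would establish by induction on $p$ using Theorem~\ref{prop1}.

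For the base case $p=1$ I would apply Theorem~\ref{prop1} with the exponent taken to be $q$ itself: since $q\ge q$, that theorem gives $\widehat\I^{(q+1)}=\I\cdot\widehat\I^{(q)}\subset\I$, which is precisely $\widehat\I^{(1+q)}\subset\I^1$. For the inductive step, suppose $\widehat\I^{(p+q)}\subset\I^p$ holds for some $p\ge 1$. The exponent $p+q$ is again at least $q$, so Theorem~\ref{prop1} applies and yields $\widehat\I^{(p+1+q)}=\I\cdot\widehat\I^{(p+q)}$; combined with the inductive hypothesis this gives $\widehat\I^{(p+1+q)}=\I\cdot\widehat\I^{(p+q)}\subset\I\cdot\I^p=\I^{p+1}$, completing the induction.

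The only point one must verify is that the exponent always stays in the range where Theorem~\ref{prop1} is valid, and this is automatic: $q$ is a fixed integer, and at every stage of the induction the relevant exponent is $p+q$ with $p\ge 1$, hence $\ge q$. So there is no genuine obstacle at this level — all the analytic difficulty (the application of Skoda's Theorem~A, and in particular arranging that the parameter $\alpha=(p+\epsilon)/q$ exceeds $1$, which is what produces the restriction $p\ge q$ and pins the shift in the final inclusion to exactly $q$) has already been absorbed into Theorem~\ref{prop1}. What remains here is a short bookkeeping argument, so I expect the write-up to be only a few lines.
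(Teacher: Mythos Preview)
Your proposal is correct and is exactly the argument the paper intends: the paper's entire proof is the single phrase ``By induction, Theorem~\ref{prop1} implies'', and you have simply written out that induction. The first inclusion is indeed the trivial one noted just before Theorem~\ref{prop1}, and your base case and inductive step are the only reasonable way to unpack the paper's one-line claim.
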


For radical ideals, this gives us

\begin{cor}\label{corrad} Let $\I$ be a radical ideal in $\CO(\Omega)$ whose variety is a finite set. Then its symbolic powers $\I^{(p)}$ satisfy $\I^{(p+q)} \subset \I^p$ for any $p\in\N$, where $q=\min\{n,r-1\}$ and $r$ is the number of generators of $\I$.
\end{cor}

This allows us to compute the asymptotical behaviour of the lengths of the (usual) powers of the ideals of functions vanishing on finite sets. Recall that the \emph{length} of such an ideal $\mathcal J$ is $l (\mathcal J)= \dim\, \mathcal O/\mathcal J<\infty$.

\begin{cor}\label{corgap} Let $\I\subset\CO(\Omega)$ be the ideal of functions vanishing at $N$ given points of $\Omega$. Then the lengths $l(\I^p)$ of the powers $\I^p$ of the ideal $\I$ satisfy
\beq\label{poles}
\lim_{p\to\infty} n!\,p^{-n} l(\I^p)=N.
\eeq
\end{cor}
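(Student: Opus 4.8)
The plan is to compute the length $l(\I^p)$ as the dimension of the quotient $\O(\Omega)/\I^p$ as a $\C$-vector space (equivalently, the sum of the local lengths $\dim_\C \O_{a}/(\I^p)_a$ over the $N$ points $a$ of the variety), and to estimate it from above and below using the containments already established. The key geometric input is that for a single point $a$, if $\m_a$ denotes the maximal ideal there, one has $\m_a^{Cp}\subset (\I^p)_a\subset \m_a^{p}$ for a suitable constant $C$ depending only on the ideal near $a$ (the left inclusion follows from the \L ojasiewicz inequality $|g(z)|\ge c\,|z-a|^{C}$ near $a$, or from integral-closure/Brian\c{c}on--Skoda-type bounds; the right inclusion is immediate since the generators vanish at $a$). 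Summing the local contributions and using that $\dim_\C \O_a/\m_a^{k} = \binom{k-1+n}{n}\sim k^n/n!$, these two-sided inclusions would already give that $l(\I^p)$ grows like a constant times $p^n$, but with the wrong constant — they only pin down the leading coefficient up to the factor $C^n$.

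To get the exact constant $N$ I would instead use Corollary~\ref{corrad} together with the trivial inclusion $\I^p\subset\I^{(p)}$. This sandwiches $\I^p$ between two symbolic powers: $\I^{(p+q)}\subset\I^p\subset\I^{(p)}$. Hence
\[
 l(\I^{(p)}) \le l(\I^p) \le l(\I^{(p+q)}),
\]
since length is monotone decreasing under inclusion of ideals (a smaller ideal gives a larger quotient). So it suffices to show that $n!\,p^{-n} l(\I^{(p)})\to N$ as $p\to\infty$; the shift of the argument by the fixed constant $q$ does not affect the limit once we divide by $p^n$. Now the symbolic power $\I^{(p)}$ of the radical ideal of $N$ points is, locally at each point $a$, exactly the $p$-th power of the maximal ideal $\m_a$ — a function vanishes to order $\ge p$ at a smooth (indeed isolated reduced) point iff it lies in $\m_a^p$. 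Therefore $l(\I^{(p)}) = \sum_{a} \dim_\C \O_a/\m_a^p = N\binom{p-1+n}{n}$, and multiplying by $n!\,p^{-n}$ and letting $p\to\infty$ gives exactly $N$.

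The only genuinely substantive ingredient is Corollary~\ref{corrad}, which supplies the upper bound $l(\I^p)\le l(\I^{(p+q)})$; everything else is the elementary identification of symbolic powers of a reduced point with powers of the maximal ideal, plus the standard Hilbert-function computation $\dim_\C \C[[z_1,\dots,z_n]]/\m^p = \binom{p-1+n}{n}$ and its asymptotics. The main obstacle, such as it is, is simply to make sure the passage between the global statement (length of $\O(\Omega)/\I^p$) and the sum of local lengths is justified — this is the standard fact that for an ideal with zero-dimensional (here finite) variety the quotient $\O(\Omega)/\I^p$ is a finite-dimensional Artinian ring that splits as the product of its localizations at the points of $S$ — and to note that replacing $p$ by $p+q$ leaves the normalized limit unchanged because $\binom{p+q-1+n}{n}/\binom{p-1+n}{n}\to 1$.
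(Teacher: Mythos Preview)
Your proposal is correct and follows essentially the same route as the paper: sandwich $\I^p$ between $\I^{(p+q)}$ and $\I^{(p)}$ via Corollary~\ref{corrad} and the trivial inclusion, compute $l(\I^{(p)})=N\binom{p+n-1}{n}$, and pass to the limit. The additional justifications you supply (the local--global splitting of the Artinian quotient and the identification $(\I^{(p)})_a=\m_a^p$) are sound elaborations of steps the paper leaves implicit.
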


\begin{proof}
Each ideal $\I^{(p)}$ consists of all functions $f\in{\CO}(\Omega)$ satisfying
$$\frac{\partial^{|\beta|} f}{\partial z^{\beta}}(a_j)=0,\quad |\beta|<p,\  1\le j\le N,$$
so
$$l(\I^{(p)})=\binom{p+n-1}{n}N.$$
Since $\I^{(p+q)} \subset\I^p\subset \I^{(p)}$, we have
$$ \binom{p+n-1}{n}\,N\le l(\I^p)\le \binom{p+q+n-1}{n}\,N,$$
which implies the claim.
\end{proof}

\medskip{\it Remark.} In \cite{RT}, a problem of behavior of multipole Green functions was studied for the case when the poles stretch to a single point. In \cite[Prop. 4.7]{RT}, it was claimed that the lengths of the ideals $\I^p$ has the asymptotic property (\ref{poles}), while actually this was proved there for the lengths of the symbolic powers $\I^{(p)}$ instead. Thus Corollary~\ref{corgap} fills the gap.

\section{Positive dimensional varieties}\label{sec3}
If $|\I|$ is not compact (i.e., $\dim\, |\I|>0$ or $\dim\,|\I|=0$ with $\sharp |\I|=\infty$), then for $f\in \widehat\I^{(p)}$ we can no longer guarantee the existence of $\epsilon>0$ such that (\ref{eps}) holds for all $\Omega'\Subset\Omega$.
%An easy counterexample, even for $g$ holomorphic on a neighbourhood $\Omega''$ of $\overline\Omega$, is a function $f=f'f''$ with $f'\in %\widehat\I^{(p)}$ and $ f''\in\O(\Omega'')$ such that $|f''||g|^{-p}\not\in L_{loc}^2(a)$ for a point $a\in\partial\Omega$.
We can however do that for each subdomain $\Omega'$ individually, with $\epsilon=\epsilon(f,\Omega')$. Therefore, in this situation it is natural  to work essentially locally, that is, with the coherent ideal sheaves $\I=\{\I_x\}_{x\in X}\subset\CO_X$, defined in terms of {\it local generators} $g_1,\ldots,g_r\in\CO_x$ (with $r$ eventually depending on $x$), where $X$ is a Stein manifold of dimension $n$. For an open subset $U$ of $X$, let $\I_U$ denote the space of all holomorphic functions in $U$ with germs in $\I$. We keep the notation
$\I^{(p)}$ , $\overline\I^{(p)}$, and $\widehat\I^{(p)}$ for the corresponding sheaves defined as in the previous section (with $S$ replaced by $|\I|$).

In this setting, directly from  Theorem~A with $\psi=0$, we get that every $f\in \widehat\I_{U}^{(p)}$, where $U\Subset X$,  can be represented as $f=\sum_j g_jh_j$ with functions $h_1,\ldots, f_r\in \CO(U) $ satisfying
$$ \int_{U} |h|^2|g|^{-2(p+\epsilon)}\,dV <\infty$$
for some $\epsilon>0$ (depending on $f$ and $U$).
Therefore, $\widehat\I_{U}^{(p+1)}\subset \I_{U}\cdot \widehat\I_{U}^{(p)}$ for all $p\ge n$ and any $U\Subset X$, which means $\widehat\I^{(p+1)}\subset \I\cdot \widehat\I^{(p)}$ in the sense of sheaves. Since $\I\cdot \widehat\I^{(p)}\subset \widehat\I^{(p+1)}$ and $\overline\I^{(p)} \subset \widehat\I^{(p)}$, this implies

\begin{prop}\label{theosh} Let  $\overline\I^{(p)}$ and $\widehat\I^{(p)}$ be ideal sheaves defined as above for a coherent ideal sheaf $\I\subset\CO_X$ and $p>0$. Then, for any $p\ge n=\dim\,X$, we have $\widehat\I^{(p+1)}=\I\cdot \widehat\I^{(p)}$ and, consequently,
$\overline\I^{(p+n)} \subset \widehat\I^{(p+n)}\subset\I^p$ for any $p\in\N$.
\end{prop}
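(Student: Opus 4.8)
The plan is to localize the argument of Theorem~\ref{prop1} to relatively compact open subsets and then globalize by coherence. Fix $U\Subset X$ and choose local generators $g_1,\ldots,g_r\in\O(U)$ of $\I$ over $U$ (after shrinking $U$ if necessary, or covering by finitely many such charts; since we only claim equality of sheaves, it suffices to treat an arbitrary small relatively compact $U$). Given $f\in\widehat\I_U^{(p+1)}$ with $p\ge n$, the defining condition $|f|\,|g|^{-(p+1)}\in L^2(U')$ for every $U'\Subset U$ yields, on the relatively compact $U$ itself, an $\epsilon=\epsilon(f,U)>0$ with $\int_U |f|^2|g|^{-2(p+1+\epsilon)}\,dV<\infty$; here one uses compactness only of $\overline U$, not of $|\I|$, which is the key difference from Section~\ref{sec2}.

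Next I would apply Theorem~A on the pseudoconvex domain $U$ (a relatively compact open subset of a Stein manifold is Stein, hence carries the needed $\bar\partial$-theory; one may pull back to a coordinate chart) with $\psi=0$ and $\alpha=(p+\epsilon)/q$, where $q=\min\{n,r-1\}\le n\le p$ so that $\alpha>1$. Since $\alpha q+1=p+\epsilon+1\le p+1+\epsilon$, the hypothesis integral is finite, and Theorem~A produces $h_1,\ldots,h_r\in\O(U)$ with $f=\sum_j g_jh_j$ and $\int_U |h|^2|g|^{-2(p+\epsilon)}\,dV<\infty$. Restricting to any $U'\Subset U$ shows each $h_j$ lies in $\widehat\I_{U'}^{(p)}$, i.e.\ the germ of $h_j$ at every point of $U$ belongs to $\widehat\I^{(p)}$. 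Hence $f\in(\I\cdot\widehat\I^{(p)})_U$ at the level of germs, giving the inclusion $\widehat\I^{(p+1)}\subset\I\cdot\widehat\I^{(p)}$ of sheaves; the reverse inclusion $\I\cdot\widehat\I^{(p)}\subset\widehat\I^{(p+1)}$ is immediate from $|g_j|\le|g|$, so we get the asserted equality $\widehat\I^{(p+1)}=\I\cdot\widehat\I^{(p)}$ for all $p\ge n$.

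The containment $\overline\I^{(p+n)}\subset\widehat\I^{(p+n)}\subset\I^p$ then follows by induction on $p$: the first inclusion is the trivial $\overline\I^{(k)}\subset\widehat\I^{(k)}$ observed in Section~\ref{sec2} (if $\log|f|\le k\log|g|+O(1)$ then $|f|\,|g|^{-k}$ is locally bounded, hence locally $L^2$), and for the second, $\widehat\I^{(p+n)}=\I\cdot\widehat\I^{(p+n-1)}=\cdots=\I^{p-1}\cdot\widehat\I^{(n+1)}\subset\I^{p-1}\cdot\I\cdot\widehat\I^{(n)}=\I^p\cdot\widehat\I^{(n)}\subset\I^p$, where each equality uses $\widehat\I^{(k+1)}=\I\cdot\widehat\I^{(k)}$ for $k\ge n$ and the final inclusion uses $\widehat\I^{(n)}\subset\O_X$. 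The main obstacle, modest as it is, is the bookkeeping needed to pass from the integral estimate on a fixed $U$ to a genuine statement about sheaf stalks: one must verify that the single $\epsilon$ valid on $U$ is enough to conclude $h_j\in\widehat\I^{(p)}$ at \emph{every} germ in $U$ (it is, since $\widehat\I^{(p)}$ asks only for membership in $L^2(U')$ for $U'\Subset U$ without a uniform exponent), and that the choice of local generators does not affect the resulting sheaf — both are routine but worth a sentence.
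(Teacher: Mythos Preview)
Your proposal is correct and follows essentially the same route as the paper: localize to a small $U\Subset X$, use the $\epsilon=\epsilon(f,U)$ room (which the paper notes just before the proposition) to feed Theorem~A with $\psi=0$, obtain $h_j\in\widehat\I^{(p)}$, and then induct to get $\widehat\I^{(p+n)}\subset\I^p$. One small correction: a relatively compact open subset of a Stein manifold need not be Stein, so your parenthetical is false as stated---but since you only need the germ-level inclusion, you may (as you indicate) simply take $U$ to be a coordinate ball, which is pseudoconvex, and the argument goes through unchanged.
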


A global result can be deduced by assuming $X$ to be compactly supported in a complex manifold $Y$ and $\I$ to have global generators $g_1,\ldots,g_r\in \CO(Y)$; denote the ideal generated by $(g_j)$ in $\CO(Y)$ by $\J$. Let $\mu$ be a log resolution of $\J$ over $Y$, i.e., a proper surjective holomorphic map from a smooth manifold $\widetilde Y$ to $Y$ which is an isomorphism outside $\mu^{-1}(|\J|)$ and such that $\widetilde\J=\mu^*\J=O_{\widetilde Y}(-D)$, where $D$ is a normal crossing divisor. Furthermore, given $f\in \widehat\I^{(p)}$, the log resolution $\mu$ can be chosen such that $F=D_{\widetilde X}+E_{\widetilde X}+\mu^*Z_f$ is a normal crossing divisor in $\widetilde X=\mu^{-1}(X)$; here $E$ is the exceptional divisor of $\mu$.

Let
$$K_{\widetilde X}=\mu^*K_X+\sum_j a_jF_j, \quad D_{\tilde X}=\sum_j b_j F_j, \quad \mu^*Z_f =\sum_j c_j F_j$$
$F_j$ being irreducible components of $|F|$. Note that $b_j\neq 0$ only for finitely many $j$; denote the collection of such indices $j$ by $J$. Then, by the coordinate change formula, the condition $f\in \widehat\I^{(p)}$ for  $p>0$ is equivalent to the collection of bounds
\beq\label{bnds} a_j+ c_j -p\,b_j>-1, \quad j=1,2,\ldots.\eeq
Since such an inequality is always true if $j\notin J$, system (\ref{bnds}) reduces to finitely many inequalities
$$ p < \frac{a_j+c_j+1}{b_j},\quad j\in J,$$
and so, implies
$$ \int_{X'} |f|^2|g|^{-2(p+\epsilon)}\,dV<\infty $$
for some $\epsilon>0$ and all $X'\Subset X$.

By repeating then the arguments of the proof of Theorem~\ref{prop1}, we get

\begin{theorem}\label{theoglob} Let an ideal $\I\subset\CO(X)$ on an $n$-dimensional Stein manifold $X\Subset Y$ have generators $g_1,\ldots,g_r\in\CO(Y)$. Then for any $p\ge q=\min\{n,r-1\}$, we have $\widehat\I^{(p+1)}=\I\cdot \widehat\I^{(p)}$ and, consequently,
$\overline\I^{(p+q)} \subset \widehat\I^{(p+q)}\subset\I^p$ for any $p\in\N$.
\end{theorem}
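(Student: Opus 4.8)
The plan is to mimic, in the global Stein setting, the argument that established Theorem~\ref{prop1} in the zero-dimensional case, the only new ingredient being the reduction, via a log resolution, of the summability condition defining $\widehat\I^{(p)}$ to finitely many discrepancy inequalities. As before, one inclusion is trivial: $\I\cdot\widehat\I^{(p)}\subset\widehat\I^{(p+1)}$, since multiplying a function $h$ with $|h||g|^{-p}\in L^2(X')$ by a generator $g_j$ produces a function in $\widehat\I^{(p+1)}$ on each $X'\Subset X$. So the content is the reverse inclusion $\widehat\I^{(p+1)}\subset\I\cdot\widehat\I^{(p)}$ for $p\ge q$.

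First I would fix $f\in\widehat\I^{(p+1)}$ and exhibit an $\eps>0$ (depending on $f$ and on a chosen $X'\Subset X$) with $\int_{X'}|f|^2|g|^{-2(p+1+\eps)}\,dV<\infty$. This is exactly the passage already carried out in the excerpt: choose a log resolution $\mu\colon\widetilde Y\to Y$ adapted to $\J$ and to the zero divisor $Z_f$, write $K_{\widetilde X}=\mu^*K_X+\sum a_jF_j$, $D_{\widetilde X}=\sum b_jF_j$, $\mu^*Z_f=\sum c_jF_j$, and note that $f\in\widehat\I^{(k)}$ is equivalent to $a_j+c_j-k b_j>-1$ for all $j$, which for $k=p+1$ holds with some room, i.e.\ stays true for $k=p+1+\eps$ with $\eps>0$ small, because only finitely many $j\in J$ are constrained (the rest having $b_j=0$). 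Pulling back and pushing forward the integral then gives the stated finiteness on any relatively compact $X'$.

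Next I would pick the plurisubharmonic weight. Since $X'\Subset X$ is relatively compact and $|f|$ is bounded there, take $\psi$ a non-negative function in $\PSH(X')$ dominating a constant multiple of, say, $|f|$ (or simply $\psi=0$ after shrinking, as in the sheaf discussion, since we only need a pseudoconvex domain and $X$ is Stein). With $I=\int_{X'}|f|^2|g|^{-2(p+1+\eps)}e^{-\psi}\,dV<\infty$ and $\alpha=(p+\eps)/q>1$ (using $p\ge q$), Theorem~A applied on a pseudoconvex $X'$ yields $h_1,\dots,h_r\in\O(X')$ with $f=\sum_j g_jh_j$ and $\int_{X'}|h|^2|g|^{-2(p+\eps)}e^{-\psi}\,dV<\frac{\alpha}{\alpha-1}I$. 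Dividing the weight out over a slightly smaller $X''\Subset X'$ gives $\int_{X''}|h|^2|g|^{-2(p+\eps)}\,dV<\infty$, so each $h_i\in\widehat\I^{(p)}_{X''}$, hence the germ of $f$ at every point of $X''$ lies in $\I\cdot\widehat\I^{(p)}$. Letting $X''$ exhaust $X$ establishes $\widehat\I^{(p+1)}=\I\cdot\widehat\I^{(p)}$ on $X$. The inclusions $\overline\I^{(p+q)}\subset\widehat\I^{(p+q)}\subset\I^p$ then follow by iterating $q$ times from $\widehat\I^{(p+q)}=\I\cdot\widehat\I^{(p+q-1)}=\cdots=\I^q\cdot\widehat\I^{(p)}\subset\I^p$ (each step legitimate since $p,p+1,\dots,p+q-1$ are all $\ge q$), together with the elementary containment $\overline\I^{(k)}\subset\widehat\I^{(k)}$.

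The main obstacle is the reduction step: ensuring that the single log resolution $\mu$ can be chosen simultaneously adapted to $\J$, to the exceptional locus, and to the divisor of the given $f$, so that $F=D_{\widetilde X}+E_{\widetilde X}+\mu^*Z_f$ has normal crossings and the membership $f\in\widehat\I^{(p)}$ translates cleanly into the finite system $p<(a_j+c_j+1)/b_j$, $j\in J$. Once one grants (as the excerpt does) the existence of such a resolution and Hironaka-type functoriality, the rest is a routine transcription of the zero-dimensional proof, with the compactness of $S$ there replaced by the finiteness of $J$ here and Theorem~A invoked on the pseudoconvex relatively compact pieces $X'$ of the Stein manifold $X$.
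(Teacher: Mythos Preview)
Your outline follows the paper's strategy (log resolution to extract a uniform $\eps$, then Skoda), but there is a genuine gap in the passage from local to global. You apply Theorem~A on a relatively compact $X'\Subset X$, obtain $h_1,\ldots,h_r\in\O(X')$ depending on $X'$, deduce that the \emph{germ} of $f$ lies in $\I\cdot\widehat\I^{(p)}$ at each point, and then ``exhaust $X$''. That exhaustion yields only the sheaf equality $\widehat\I^{(p+1)}=\I\cdot\widehat\I^{(p)}$, which is essentially Proposition~\ref{theosh}; it does not by itself give the equality of ideals in $\O(X)$ asserted in Theorem~\ref{theoglob}, because the $h_j$ on different $X'$ need not glue. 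To close this you would have to invoke coherence of $\widehat\I^{(p)}$ (as a multiplier ideal sheaf) together with Cartan~B to show that the surjection $\bigoplus_j\widehat\I^{(p)}\to\I\cdot\widehat\I^{(p)}$ is surjective on global sections---none of which you mention.

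The paper avoids this detour precisely by exploiting what you state but then discard: the log resolution produces a \emph{single} $\eps>0$ valid for \emph{all} $X'\Subset X$ (the inequalities $p+1<(a_j+c_j+1)/b_j$, $j\in J$, do not see $X'$), not an $\eps$ ``depending on $f$ and on a chosen $X'$''. With this uniform $\eps$ one chooses a plurisubharmonic exhaustion weight $\psi$ on the Stein manifold $X$ so that $\int_X|f|^2|g|^{-2(p+1+\eps)}e^{-\psi}\,dV<\infty$, and then applies Skoda's theorem once, globally on $X$, obtaining global $h_j\in\O(X)$ with $\int_X|h|^2|g|^{-2(p+\eps)}e^{-\psi}\,dV<\infty$; bounding $\psi$ on each $X'\Subset X$ then shows $h_j\in\widehat\I^{(p)}$. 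That is the ``repeating the arguments of the proof of Theorem~\ref{prop1}'' the paper refers to.

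A minor slip: your induction at the end is backwards. Iterating $q$ times to reach $\I^q\cdot\widehat\I^{(p)}$ requires $p\ge q$, while $\I^q\cdot\widehat\I^{(p)}\subset\I^p$ needs $q\ge p$; together this handles only $p=q$. Iterate $p$ times instead, down to $\I^p\cdot\widehat\I^{(q)}\subset\I^p$, each step using $\widehat\I^{(k+1)}=\I\cdot\widehat\I^{(k)}$ with $k\ge q$.
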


\vskip1cm

Tek/Nat, University of Stavanger, 4036 Stavanger, Norway

\vskip0.1cm

{\sc E-mail}: alexander.rashkovskii@uis.no


\begin{thebibliography}{11}

\bibitem{BH}
 C. Bocci and B. Harbourne, {\it Comparing powers and symbolic powers of ideals},
 J. Algebraic Geom. {\bf 19} (2010), 399--417.

\bibitem{BS}
J. Brian\c{c}on, H. Skoda, {\it Sur la cl\^{o}ture int\'{e}grale d'un id\'{e}al de germes de fonctions holomorphes en un point de $\Cn$},
C. R. Acad. Sci. Paris S\'{e}r. A {\bf 278} (1974), 949--951.

\bibitem{CHHT}
 S.D. Cutkosky, H.T. H\`{a}, H. Srinivasan, E. Theodorescu, {\it Asymptotic behavior of the length of local cohomology}, Canad. J. Math. {\bf 57} (2005), no. 6, 1178--1192.

\bibitem{Dbook}
{ J.P. Demailly}, Complex Analytic and Differential Geometry.
Available at
\url{http://www-fourier.ujf-grenoble.fr/~demailly/books.html}

\bibitem{ELS}
 L. Ein, R. Lazarsfeld and K. Smith, {\it Uniform bounds and symbolic powers on smooth varieties}, Invent. Math. {\bf 144} (2001), 241--252.
 
 \bibitem{GZ}
{ Qi'an Guan and Xiangyu Zhou}, \emph{A proof of Demailly's strong openness conjecture},
Ann. of Math. (2) {182} (2015), no. 2, 605--616.

\bibitem{HH}
M. Hochster and C. Huneke, {\it Comparison of symbolic and ordinary powers of ideals}, Invent. Math. {\bf 147} (2002), no. 2, 349--369.

 \bibitem{LS}
  A. Li and I. Swanson, {\it Symbolic powers of radical ideals}, Rocky Mountain J. of Math. {\bf 36} (2006), 997--1009.

  \bibitem{RT}
{ A. Rashkovskii and P. Thomas}, {\it Powers of ideals and convergence of Green functions with colliding poles},  Int. Math. Res. Not. {\bf 2014}, no. 5, 1253--1272.

\bibitem{S}
H. Skoda, {\it Application des techniques $L^2$ \`{a} la th\'{e}orie des id\'{e}aux d'une alg\`{e}bre de fonctions holomorphes avec poids},
Ann. Sci. \'{E}cole Norm. Sup. (4) {\bf 5} (1972), 545--579.

\bibitem{Sw}
 I. Swanson, {\it Linear equivalence of topologies}, Math. Z. {\bf 234} (2000), 755--775.

 \bibitem{SS}
  T. Szemberg and J. Szpond, {\it On the containment problem}, Rend. Circ. Mat. Palermo (2) {\bf 66} (2017), no. 2, 233--245.

\end{thebibliography}
\end{document}